\newtheorem{Th}{Theorem}[section]
\newtheorem{Prop}[Th]{Proposition}
\newtheorem{Lem}[Th]{Lemma}
\newtheorem{Rem}[Th]{Remark}
\newcommand{\eps}{\varepsilon}
\newcommand{\R}{\mathbb{R}}
\newcommand{\cJ}{{\mathcal J}}
\newcommand{\weakto}{\rightharpoonup}
\renewcommand{\div}{\mathrm{div}\,}
\numberwithin{equation}{section}
\newcommand{\Lg}{\Delta_g}
\begin{document}

\title{Note on elliptic equations on closed manifolds with singular nonlinearities}

\author[B. Bieganowski]{Bartosz Bieganowski}
\address[B. Bieganowski]{\newline\indent
			Faculty of Mathematics, Informatics and Mechanics, \newline\indent
			University of Warsaw, \newline\indent
			ul. Banacha 2, 02-097 Warsaw, Poland}	
			\email{\href{mailto:bartoszb@mimuw.edu.pl}{bartoszb@mimuw.edu.pl}}	

\author[A. Konysz]{Adam Konysz}
\address[A. Konysz]{\newline\indent  	
			Faculty of Mathematics and Computer Science,		\newline\indent 
			Nicolaus Copernicus University, \newline\indent 
			ul. Chopina 12/18, 87-100 Toru\'n, Poland}
			\email{\href{mailto:adamkon@mat.umk.pl}{adamkon@mat.umk.pl}}

\date{}	\date{\today}

\begin{abstract} 
We consider a general elliptic equation
$$
-\Lg u+V(x)u=f(x,u)+g(x,u^2)u
$$
on a closed Riemannian manifold $(M, g)$ and utilize a recent variational approach by Hebey, Pacard, Pollack to show the existence of a nontrivial solution under general assumptions on nonlinear terms $f$ and $g$.
\medskip

\noindent \textbf{Keywords:} variational methods, singular nonlinearities, Einstein field equations, Lichnerowicz equation, elliptic problems
   
\noindent \textbf{AMS Subject Classification:} 35Q75, 35A15, 35J20, 58J05, 83C05
\end{abstract}

\maketitle

\pagestyle{myheadings} \markboth{\underline{B. Bieganowski, A. Konysz}}{
		\underline{Note on elliptic equations on closed manifolds with singular nonlinearities}}

\section{Introduction}

The aim of this note is to study the existence of solutions to general elliptic problems with singular nonlinearities on a closed (compact and without a boundary) Riemannian manifold $(M,g)$ of dimension $N \geq 3$. Namely, we consider the following equation
\begin{equation}\label{eq:einstein-lichnerowicz}
    -\Lg u+V(x)u=f(x,u)+g(x,u^2)u,
\end{equation}
where $\Lg := \div (\nabla_g)$ is the Laplace-Beltrami operator on $M$, $f:M\times\R\to\R$, and $g:M\times(0,\infty)\to\R$ is the singular term.

One primary motivation for studying such problems arises in general relativity, specifically from the Cauchy problem for the Einstein field equations. In that setting, the so-called Gauss–Codazzi constraint equations must be satisfied by the initial data \cite{Bartnik}. Through the conformal method (see \cite{Choquet-Bruhat, Isenberg}), these constraints reduce to an elliptic equation \eqref{eq:einstein-lichnerowicz} with
\begin{equation} \label{HebeyCase}
f(x,u) = B(x) |u|^{2^*-2}u, \quad g(x,u^2)u = \frac{A(x)}{(u^2)^{2^* / 2} u},
\end{equation}
where $2^* = \frac{2N}{N-2}$ is the critical Sobolev exponent in dimension $N \geq 3$. When also the presence of an electromagnetic field is included, an additional singular term arises and we have (see \cite[Section 7]{IsenbergCQG12})
$$
f(x,u) = B(x) |u|^{2^*-2}u, \quad g(x,u^2)u = \frac{A(x)}{(u^2)^{2^* / 2} u} + \frac{C(x)}{(u^2)^{p / 2} u}
$$
for some $p \in (2,2^*)$.

Here we mention that singular nonlinearities were studied also in the case of a bounded domain $\Omega \subset \R^N$ with Dirichlet boundary conditions in, e.g. \cite{ArcoyaMoreno,BoccardoOrsina}. Since a bounded domain in $\R^N$ cannot be treated as a manifold without boundary, here we only point that we can consider in \eqref{eq:einstein-lichnerowicz} nonlineraties that were considered in \cite{ArcoyaMoreno,BoccardoOrsina}.

We rely on the recent approach in \cite{Hebey} (see also \cite{Premoselli} for further extensions) to outline a set of hypotheses that guarantees the existence of solutions to \eqref{eq:einstein-lichnerowicz} under rather general conditions on $f$ and $g$. We emphasize that the approach is completely based on \cite{Hebey}, adapted to the setting of general nonlinear terms.

We introduce the following assumptions on the regular nonlinear term $f$.
\begin{enumerate}
	\item[(F1)] $f : M \times \R \rightarrow \R$ is H\"older continuous with some exponent $\alpha < 1$ in $x \in M$, and continuous and odd in $u \in \R$; moreover
    $$
    |f(x,u)|\lesssim 1+|u|^{2^*-1} \quad \mbox{ for all } (x,u) \in M \times \R.
    $$
	\item[(F2)] $f(x,u)=o(u)$ as $u\to 0$, uniformly with respect to $x \in M$.
	\item[(F3)] There is $\mu>2$ such that $f(x,u)u\geq \mu F(x,u) \geq 0$, where $F(x,u) := \int_0^u f(x,t) \, dt$.
\end{enumerate}
It is classical to check that (F1), (F2) imply that for every $\delta>0$ there exists $C_\delta>0,$ such that the following inequality holds
\begin{equation}\label{ineq:growth}
    |f(x,u)|\leq \delta |u|+C_\delta |u|^{2^*-1},
\end{equation}
while (F3) is the well-known Ambrosetti-Rabinowitz assumption. On the singular term $g$ we impose the following.
\begin{enumerate}
	\item[(G1)] $g : M \times (0,\infty) \rightarrow \R$ is H\"older continuous with some exponent $\alpha < 1$ in $x \in M$ and continuous in $u \in \R$, $G(x,u)\leq 0$ for all $(x,u) \in M \times \R$, where $G(x,u):=\int_0^u g(x,t)\,dt$.
	\item[(G2)] The map $(0,\infty) \ni u \mapsto G(x,u)$ is increasing for all $x \in M$ and the map $(0,\infty) \ni u \mapsto g(x,u)$ is decreasing for all $x \in M$.
    \item[(G3)]$G(\cdot,u)\in L^1(M)$ for all $u>0$.
    \item[(G4)] $\min_M g(\cdot,u)\to\infty$ as $u\to 0^+$.
\end{enumerate}
\begin{Rem}\label{rem:assumptions}
\begin{enumerate}
    \item[(a)] Note that, since $g$ is continuous in $x$, thanks to (G1), $g(\cdot, u) \in L^\infty (M)$ for every $u > 0$.
    \item[(b)] Since in (G2) we assume that $G (x, \cdot)$ is increasing, we know that $g(x, u) \geq 0$ for all $(x,u) \in M \times (0,\infty)$.
\end{enumerate}
\end{Rem}
On $V$ we assume that
\begin{enumerate}
    \item[(V)] $V \in C^{0,\alpha} (M)$, for some $\alpha<1$, is such that $\inf \sigma (-\Lg + V(x)) > 0$.
\end{enumerate}
In particular, under (V), the operator $-\Lg+V(x)$ on $L^2 (M)$ is coercive, namely there exists a constant $K_V=K(M,g,V)>0$, such that 
$$
\int_M |u|^2\,dv_g\leq K_V\int_M |\nabla u|^2+V(x)u^2\,dv_g
$$
for $u\in H^1(M)$. Hence, we equip the space $H^1(M)$ with norm
$$
\|u\|^2=\int_M |\nabla u|^2+V(x)u^2\,dv_g
$$
that is equivalent to the standard one. We will denote $S_V=S(M,g,V)>0,$ the optimal constant for the embedding
$$
\int_M|u|^{2^*}\,dv_g\leq S_V\left(\int_M |\nabla u|^2+V(x)u^2\,dv_g\right)^{\frac{2^*}{2}}.
$$
Moreover let us assume 
\begin{enumerate}
    \item[(GF)]\label{GF} there exists $\psi\in C^\infty(M)$ such that
\begin{equation}\label{Assumt_exist_phi}
-\int_M G \left(x,\left(\beta \frac{\psi}{\|\psi\|}\right)^2 \right)\,dv_g\leq\frac{1}{2N \left( S_V C_{\frac{1}{4K_V}} \right)^{\frac{N}{2}-1} }
\end{equation}
and
\begin{equation}
\int_M F\left(x,\beta \frac{\psi}{\|\psi\|}\right)\,dv_g>0,
\end{equation}
where 
$$
\beta := \frac{1}{(6(N-1))^\frac12}\left(\frac{1}{2\cdot2^*S_V C_{\frac{1}{4K_V}}}\right)^\frac{N-2}{4},
$$
and $C_{\frac{1}{4K_V}} > 0$ is a constant given in \eqref{ineq:growth} for $\delta = \frac{1}{4K_V}$.
\end{enumerate}
In the case of \eqref{HebeyCase} we recover the assumption from \cite{Hebey}. It is clear that in (GF) we may assume, without loosing generality, that $\|\psi\|=1$.

\begin{Th}\label{th:main}
Suppose that (F1)--(F3), (G1)--(G4), (V), (GF) are satisfied. Then, there exists a nontrivial, positive weak solution $u \in H^1 (M)$ of \eqref{eq:einstein-lichnerowicz}, namely for any $\varphi \in H^1(M)$, $\int_{M}  g(x, u^2) \left| u \varphi \right| \, dv_g < \infty$ and
$$
\int_{M} \nabla_g u \nabla_g \varphi + V(x) u \varphi \, dv_g = \int_{M} f(x,u)\varphi \, dv_g + \int_{M} g(x,u^2) u \varphi \, dv_g.
$$
\end{Th}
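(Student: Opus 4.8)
\noindent\emph{Sketch of the argument.} The proof follows the scheme of \cite{Hebey}, adapted to general $f,g$. Writing $F(x,u)=\int_0^u f(x,t)\,dt$, the map $u\mapsto-\tfrac12 G(x,u^2)$ is a primitive of $u\mapsto g(x,u^2)u$, so the natural energy is
$$
\cJ(u)=\frac12\|u\|^2-\int_M F(x,u)\,dv_g-\frac12\int_M G(x,u^2)\,dv_g.
$$
By (G1) its singular part $-\tfrac12\int_M G(x,u^2)\,dv_g$ is \emph{nonnegative}, and by (G4) it diverges as $u\to0^+$: it is a penalization keeping solutions off $\{u=0\}$. To control the singularity I would fix, using (G4) (so that $\min_M g(\cdot,s)$ exceeds any fixed bound as $s\to0^+$) together with $f(x,\cdot)\ge0$ on $(0,\infty)$, which follows from (F3), a small constant $\eps_0>0$ for which the constant function $\eps_0$ is a \emph{strict} subsolution of \eqref{eq:einstein-lichnerowicz}, i.e. $V(x)\eps_0<f(x,\eps_0)+g(x,\eps_0^2)\eps_0$ on $M$; then work with the regularizations obtained by replacing $g(x,s)$ by $g(x,s+\eta)$ for $\eta\in(0,\eps_0^2)$ --- for which $\cJ_\eta\in C^1(H^1(M))$ and $\eps_0$ is still a strict subsolution of the corresponding equation --- and pass to the limit $\eta\to0^+$ at the end.

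For fixed $\eta$ I would produce a critical point of $\cJ_\eta$ by a minimax (mountain-pass type) construction below a compactness threshold, as in \cite{Hebey}. The minimax geometry rests on \eqref{ineq:growth} (applied with $\delta=\tfrac{1}{4K_V}$), the coercivity of $\|\cdot\|$ and the Sobolev inequality defining $S_V$, while (F3) (Ambrosetti--Rabinowitz) both shapes the minimax and keeps Palais--Smale sequences bounded in $H^1(M)$; the second inequality of (GF), $\int_M F(x,\beta\psi)\,dv_g>0$, supplies, together with (F3), a direction along which $\cJ_\eta\to-\infty$, so the construction is nondegenerate. Because $f$ has critical growth \eqref{ineq:growth} while the singular part of $\cJ_\eta$ is nonnegative and weakly lower semicontinuous (Fatou), a concentration argument of Aubin/Brezis--Nirenberg type shows that any bounded Palais--Smale sequence at a level strictly below
$$
c^*:=\frac{1}{2N\big(S_V C_{\frac{1}{4K_V}}\big)^{\frac N2-1}}
$$
is relatively compact; its limit $u_\eta$ is a solution of the regularized equation, and $u_\eta\ge\eps_0$ by comparison with the subsolution.

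The crux is that the minimax level stays below $c^*$, uniformly in $\eta$, which is precisely what (GF) ensures. Evaluating the minimax along a path through the comparison function $\beta\psi$ (recall $\|\psi\|=1$), the singular contribution is bounded by $-\tfrac12\int_M G(x,(\beta\psi)^2)\,dv_g\le\tfrac12 c^*$ by the first inequality of (GF), the $F$-term only lowers the energy along the path, and the explicit value of $\beta$ is calibrated --- through the Sobolev inequality, exactly as in \cite{Hebey} --- so that the maximal energy along the path does not exceed $\tfrac12\beta^2+\tfrac12 c^*$, which is $<c^*$ because $\beta^2<c^*$ (a direct consequence of the formula defining $\beta$). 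I expect this calibration, together with verifying that the sign-definite, weakly lower semicontinuous singular term does not disturb the sub-threshold concentration analysis, to be the main obstacle.

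It remains to pass to the limit. From $c_\eta<c^*$ and (F3) one has $\|u_\eta\|\le C$ uniformly, and the threshold argument gives a subsequence with $u_\eta\to u$ strongly in $H^1(M)$, $u\ge\eps_0>0$. On $\{u\ge\eps_0\}$ one has $0\le g(x,u^2)\le g(x,\eps_0^2)\in L^\infty(M)$ by (G2) and Remark~\ref{rem:assumptions}(a), so dominated convergence lets one pass the limit through the equation: $u$ is a weak solution of \eqref{eq:einstein-lichnerowicz}. Since $\eps_0$ is a \emph{strict} subsolution, elliptic regularity and the strong maximum principle yield $u>\eps_0$ on $M$; in particular $\int_M g(x,u^2)|u\varphi|\,dv_g\le\|g(\cdot,\eps_0^2)\|_{L^\infty}\|u\|_{L^2}\|\varphi\|_{L^2}<\infty$ for every $\varphi\in H^1(M)$, and the asserted identity is $\langle\cJ'(u),\varphi\rangle=0$. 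This produces the desired nontrivial positive weak solution; in the case \eqref{HebeyCase} the construction reduces to that of \cite{Hebey}.
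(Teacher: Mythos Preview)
Your overall architecture---regularize the singular term, run a mountain-pass argument on $\cJ_\eta$, then send $\eta\to0$---is exactly the paper's. Two of the mechanisms you invoke, however, deviate from the paper and are not justified as stated.

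First, you secure the uniform lower bound $u_\eta\ge\eps_0$ by ``comparison with the subsolution''. A mountain-pass critical point is \emph{not} automatically above a given subsolution; to make this work one would have to truncate the nonlinearity below $\eps_0$, verify that the minimax geometry survives the truncation, and then argue that critical points of the truncated functional satisfy $u\ge\eps_0$---none of which you supply. The paper bypasses all of this: after establishing $C^{2,\alpha}$ regularity of $u_{\eps_k}$ (Brezis--Kato plus bootstrap), it evaluates the equation at a global minimum point $x_k$, where $-\Lg u_k(x_k)\le0$, and combines this with (F2) and (G4) to rule out $\min_M u_k\to0$. This pointwise minimum-principle argument is both simpler and self-contained.

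Second, you interpret (GF) as forcing the minimax level below a Palais--Smale compactness threshold $c^*$ and then invoke an Aubin/Brezis--Nirenberg concentration analysis to obtain \emph{strong} $H^1$-convergence. The paper does neither. For a general $f$ satisfying only (F1)--(F3) there is no reason the number $\tfrac{1}{2N}(S_V C_{1/(4K_V)})^{1-N/2}$ is a compactness threshold; it is simply the constant appearing in (GF). In the paper (GF) serves solely to set up the mountain-pass geometry (the starting point $t_1\psi$ lies strictly below the ridge at $\|u\|=t_0$), and strong convergence is never claimed or needed: the paper extracts a \emph{weak} $H^1$ limit of the bounded Palais--Smale sequence, shows via uniform integrability and Vitali's theorem that this weak limit solves the $\eps$-problem, and then repeats the same weak-limit/Vitali argument along $\eps_k\to0$ (the uniform lower bound from the previous paragraph making the singular term harmless). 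Your concentration step is thus both unsubstantiated in this generality and unnecessary.
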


\section{The \texorpdfstring{$\varepsilon$}{epsilon}-perturbed problem and the Mountain Pass Theorem}

Define the functional $\cJ_\eps:H^1(M)\to\R$ with formula
$$
\cJ_\eps(u)=\frac12\|u\|^2-\int_M F(x,u)\,dv_g-\frac12 \int_M G(x,\eps+u^2)\,dv_g.
$$
Observe that $\cJ_\varepsilon$ is of $C^1$-class. Indeed, for the first two terms it is standard. Fix $v \in H^1(M)$ and take $t \in (0,1)$, and consider the difference quotient
\begin{align*}
\frac{\frac12 \int_M G(x,\eps+(u+tv)^2)\,dv_g - \frac12 \int_M G(x,\eps+u^2)\,dv_g}{t} &= \frac12 \int_M \frac{G(x,\eps+(u+tv)^2) - G(x,\eps+u^2)}{t} \,dv_g \\
&= \int_{M} g(x, \varepsilon + (u+\theta_t v)^2) (u+\theta_t v) v \, dv_g,
\end{align*}
where in the last equality we used the mean value theorem and $\theta_t \in [0,t]$. To show that the last integral is bounded in $L^1(M)$ uniformly with respect to $t$, it is enough to use the monotonicity of $g$ and the fact that $g(\cdot, \varepsilon) \in L^\infty(M)$.

Following \cite{Hebey}, define for $t>0$ functions $\Phi,\Psi:[0,\infty)\to \R$ by
\begin{align*}
\Phi(t)&= \frac14t^2-S_VC_{\frac{1}{4K_V}}t^{2^*}, \\
\Psi(t)&= \frac34t^2+S_VC_{\frac{1}{4K_V}}t^{2^*},
\end{align*}
then
$$
\Phi(\|u\|)\leq \frac12\|u\|^2-\int_M F(x,u)\,dv_g \leq \Psi(\|u\|).
$$
To simplify the notation we set $C:=C_{\frac{1}{4K_V}}$. Maximum of $\Phi$ is attained in
$$
t_0:=\left(\frac{1}{2\cdot2^*S_VC}\right)^\frac{N-2}{4}.
$$

\begin{Lem}\label{lem1}
There exists $t_1 > 0$ such that
$$
\cJ_\varepsilon (t_1 \psi) < \inf_{\|u\|=t_0} \cJ_\varepsilon (u) \quad \mbox{and} \quad \|t_1 \psi\| < t_0,
$$
where $\psi$ is given in (GF).
\end{Lem}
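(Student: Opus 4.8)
The plan is to exploit the two competing bounds recorded just before the statement, namely that $\Phi(\|u\|) \le \cJ_0(u) + \frac12\int_M G(x,u^2)\,dv_g$ together with the sign condition $G \le 0$ from (G1), to separate the value of $\cJ_\varepsilon$ on the small sphere $\{\|u\|=t_0\}$ from its value at a point $t_1\psi$ that lies strictly inside that sphere. The key numerical fact is that $\Phi$ attains its maximum at $t_0$, and a short computation gives $\Phi(t_0) = \left(\tfrac14 - \tfrac{1}{2\cdot 2^*}\right) t_0^2 = \tfrac{1}{N}\left(\tfrac{1}{2\cdot 2^* S_V C}\right)^{(N-2)/2}$ (using $2^* = \tfrac{2N}{N-2}$ to simplify the coefficient). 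Since $G \le 0$, for any $u$ with $\|u\|=t_0$ one has $\cJ_\varepsilon(u) \ge \Phi(\|u\|) - \tfrac12\int_M G(x,\varepsilon+u^2)\,dv_g \ge \Phi(t_0) > 0$; here I also want $G(x,\varepsilon+u^2)\le 0$, which follows from (G1) applied at the argument $\varepsilon+u^2>0$. Hence $\inf_{\|u\|=t_0}\cJ_\varepsilon(u) \ge \Phi(t_0)$, a positive bound independent of $\varepsilon$.

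Next I would estimate $\cJ_\varepsilon(t_1\psi)$ from above for small $t_1>0$. Using $\|\psi\|=1$ (permitted by the remark after (GF)) and the upper bound $\tfrac12\|u\|^2 - \int_M F(x,u)\,dv_g \le \Psi(\|u\|)$, we get
$$
\cJ_\varepsilon(t_1\psi) \le \Psi(t_1) - \tfrac12\int_M G(x,\varepsilon + t_1^2\psi^2)\,dv_g.
$$
The term $\Psi(t_1) = \tfrac34 t_1^2 + S_V C\, t_1^{2^*} \to 0$ as $t_1\to 0^+$. For the singular term I want $-\tfrac12\int_M G(x,\varepsilon+t_1^2\psi^2)\,dv_g \to -\tfrac12\int_M G(x,t_1^2\psi^2)\,dv_g$ and then $\to 0$ as $t_1\to 0^+$; the monotonicity in (G2) (so $G(x,\varepsilon+t_1^2\psi^2)$ lies between $G(x,t_1^2\psi^2)\le 0$ and $G(x,\varepsilon+t_1^2\psi^2)\le 0$, the latter decreasing to the former as $\varepsilon\to 0$ though here $\varepsilon$ is fixed) together with $G\le 0$ lets me bound $0 \ge G(x,\varepsilon+t_1^2\psi^2) \ge G(x,t_1^2\psi^2)$ pointwise, and (G3) gives $G(\cdot, t_1^2\|\psi\|_\infty^2)\in L^1(M)$; since $G(x,\cdot)$ is increasing, $|G(x,t_1^2\psi^2(x))| \le |G(x,t_1^2\|\psi\|_\infty^2)|$ is an $L^1$ dominating function, and pointwise $G(x,t_1^2\psi^2(x))\to G(x,0)=0$ as $t_1\to 0$, so dominated convergence forces $\int_M G(x,t_1^2\psi^2)\,dv_g\to 0$. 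Thus $\cJ_\varepsilon(t_1\psi)\to 0^+$ as $t_1\to 0^+$, in fact one checks $\cJ_\varepsilon(t_1\psi) \le \Psi(t_1)\to 0$, which is strictly below the positive constant $\Phi(t_0)$ once $t_1$ is small enough. Simultaneously $\|t_1\psi\| = t_1 < t_0$ for all small $t_1$. Choosing any such $t_1$ finishes the proof.

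The main obstacle is the handling of the singular term near $u$-argument $0$: one must be careful that, although $g$ blows up as $u\to 0^+$ by (G4), the antiderivative $G$ stays controlled, and that the $\varepsilon$-regularization does not destroy the sign or the integrability — this is exactly what (G1), (G2), (G3) are arranged to give, and the dominated-convergence argument above is the delicate point. Note the choice of $t_1$ may depend on $\varepsilon$, which is harmless since Lemma~\ref{lem1} is stated for a fixed $\varepsilon$; but it is worth remarking that the bound $\inf_{\|u\|=t_0}\cJ_\varepsilon(u)\ge\Phi(t_0)$ is uniform in $\varepsilon$, which will matter later when passing $\varepsilon\to 0$. (The role of $\beta$ and the precise constant in (GF) does not enter this particular lemma — it will be used elsewhere to control the mountain-pass level from above — so here $t_1$ need only be taken small, with no quantitative lower bound required.)
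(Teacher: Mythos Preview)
Your argument has a genuine gap in the treatment of the singular term. You claim that $-\tfrac12\int_M G(x,t_1^2\psi^2)\,dv_g \to 0$ as $t_1\to 0^+$, but this is false in general: since $G(x,\cdot)$ is increasing and $G\le 0$, the quantity $|G(x,s)|=-G(x,s)$ is \emph{decreasing} in $s$, so as $t_1\to 0$ the integrand $-G(x,t_1^2\psi^2)$ grows, and in the model case $g(x,s)=A(x)s^{-(2^*/2+1)}$ one has $-G(x,s)\to +\infty$ as $s\to 0^+$. Your domination inequality $|G(x,t_1^2\psi^2(x))|\le |G(x,t_1^2\|\psi\|_\infty^2)|$ therefore points the wrong way, and the proposed dominating function depends on $t_1$ in any case. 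The assertion $\cJ_\varepsilon(t_1\psi)\le \Psi(t_1)$ is likewise incorrect: the term $-\tfrac12\int_M G(x,\varepsilon+t_1^2\psi^2)\,dv_g$ is nonnegative, so it can only \emph{raise} the upper bound above $\Psi(t_1)$.

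The missing ingredient is precisely the quantitative hypothesis (GF), which you dismissed as irrelevant to this lemma. In the paper's proof one takes the explicit value $t_1=\beta=\theta t_0$ with $\theta=(12(N-1))^{-1/2}$, checks by a short computation that $\Psi(t_1)<\tfrac12\Phi(t_0)$, and then invokes (GF) --- which is exactly the statement $-\tfrac12\int_M G(x,(t_1\psi)^2)\,dv_g\le \tfrac12\Phi(t_0)$ for this specific $t_1$ --- together with the monotonicity bound $-G(x,\varepsilon+(t_1\psi)^2)\le -G(x,(t_1\psi)^2)$ to conclude $\cJ_\varepsilon(t_1\psi)<\Phi(t_0)\le \inf_{\|u\|=t_0}\cJ_\varepsilon(u)$. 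Without (GF) there is no control on the singular contribution at any point inside the sphere, and the lemma cannot be established; as a by-product this choice of $t_1$ is independent of $\varepsilon$, which is what the later uniform bound on $c_\varepsilon$ requires.
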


\begin{proof}
Let 
$$
\theta:=\left( \frac{1}{12(N-1)} \right)^{1/2}.
$$
Then $t_1:=\theta t_0$, and using that $N \geq 3$, we get
\begin{align*}
\Psi(t_1)=\left(\frac{1}{16(N-1)}+\left(\frac{1}{12(N-1)}\right)^{\frac{2^*}{2}}\frac{N-2}{4N}\right)\left(\frac{1}{2\cdot 2^*S_VC}\right)^\frac{N-2}{2}\\
< \left(\frac{1}{8}-\frac12\frac{N-2}{4N}\right)\left(\frac{1}{2\cdot 2^*S_VC}\right)^\frac{N-2}{2}=\frac{1}{2}\Phi(t_0).
\end{align*}
Note that \eqref{Assumt_exist_phi} takes a form
\begin{equation}\label{Assumpt_phi_Phi}
-\frac12\int_M G \left(x,(t_1\psi)^2\right)\,dv_g\leq \frac12\Phi(t_0),
\end{equation}
where we used that $\|\psi\|=1$. Then, by \eqref{Assumpt_phi_Phi} and monotonicity of $G$, we have that for any $\|u\|=1$,
\begin{align*}
\cJ_\eps \left(t_1\psi\right)&=\frac12\|t_1\psi\|^2-\int_M F(x,t_1\psi)\,dv_g-\frac12\int_M G(x,\eps+(t_1\psi)^2)\,dv_g\\
&\leq \Psi(t_1) - \frac12 \int_M G(x,\eps+(t_1\psi)^2)\,dv_g \\
&< \frac12 \Phi(t_0)- \frac12 \int_M G(x,(t_1\psi)^2)\leq \Phi(t_0)
\leq \frac12\|t_0 u\|^2-\int_M F(x,t_0 u)\,dv_g
\leq \cJ_\eps(t_0 u).
\end{align*}
Hence
$$
\cJ_\varepsilon (t_1 \psi) < \inf_{\|u\| = t_0} \cJ_\varepsilon(u) \quad \mbox{and} \quad \|t_1 \psi\| < t_0,
$$
and the proof is completed. 
\end{proof}

\begin{Lem}\label{lem2}
$$
\lim_{t\to\infty}\cJ_\eps(t\psi)=-\infty.
$$ 
\end{Lem}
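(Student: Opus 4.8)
The plan is to obtain an explicit upper bound of the form $\cJ_\eps(t\psi)\le \tfrac{t^2}{2}-c_0t^\mu+C'$ valid for all large $t$, with $c_0>0$ and $C'\in\R$ independent of $t$, and then let $t\to\infty$, exploiting $\mu>2$. Since we normalized $\|\psi\|=1$, we have $\tfrac12\|t\psi\|^2=\tfrac{t^2}{2}$, so the quadratic part is harmless and only the two integral terms in $\cJ_\eps$ need to be controlled.

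First I would dispose of the singular term, which is the only one that could \emph{a priori} obstruct the limit, since by (G1) it contributes the nonnegative quantity $-\tfrac12\int_M G(x,\eps+t^2\psi^2)\,dv_g$. By (G2) the map $G(x,\cdot)$ is nondecreasing on $(0,\infty)$, hence $G(x,\eps+t^2\psi(x)^2)\ge G(x,\eps)$ for a.e.\ $x\in M$ and every $t>0$; combined with (G3), i.e.\ $G(\cdot,\eps)\in L^1(M)$, this gives
$$
-\tfrac12\int_M G(x,\eps+t^2\psi^2)\,dv_g\ \le\ -\tfrac12\int_M G(x,\eps)\,dv_g\ =:\ C'<\infty
$$
uniformly in $t>0$. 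So this term stays bounded above by a fixed constant.

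The core of the argument is a super-quadratic lower bound on $\int_M F(x,t\psi)\,dv_g$, via the classical Ambrosetti--Rabinowitz computation. Since $f(x,\cdot)$ is odd by (F1), $F(x,\cdot)$ is even; for fixed $x$ we have $F(x,\cdot)\in C^1(\R)$ with $\partial_uF(x,u)=f(x,u)$, and (F3) gives, for $u>0$,
$$
\frac{d}{du}\!\left(\frac{F(x,u)}{u^\mu}\right)=\frac{u f(x,u)-\mu F(x,u)}{u^{\mu+1}}\ \ge\ 0,
$$
so $u\mapsto F(x,u)u^{-\mu}$ is nondecreasing on $(0,\infty)$. Applying this with $u=t|\psi(x)|\ge\beta|\psi(x)|$ for $t\ge\beta$, and using evenness, yields for a.e.\ $x$ (trivially where $\psi(x)=0$, since both sides vanish, and by the monotonicity otherwise)
$$
F(x,t\psi(x))=F(x,t|\psi(x)|)\ \ge\ \Big(\tfrac{t}{\beta}\Big)^{\!\mu}F(x,\beta|\psi(x)|)=\Big(\tfrac{t}{\beta}\Big)^{\!\mu}F(x,\beta\psi(x)).
$$
Integrating over $M$ and using the second inequality in (GF), namely $\int_M F(x,\beta\psi)\,dv_g>0$ (recall $\|\psi\|=1$), we get $\int_M F(x,t\psi)\,dv_g\ge c_0t^\mu$ for $t\ge\beta$, with $c_0:=\beta^{-\mu}\int_M F(x,\beta\psi)\,dv_g>0$.

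Combining the two estimates, $\cJ_\eps(t\psi)\le \tfrac{t^2}{2}-c_0t^\mu+C'$ for all $t\ge\beta$, and since $\mu>2$ and $c_0>0$ the right-hand side tends to $-\infty$ as $t\to\infty$, which proves the lemma. The only step requiring genuine care is keeping the sign-definite singular term from ruining the limit; that it remains bounded above is exactly what the monotonicity (G2) together with $G\le 0$ and $G(\cdot,\eps)\in L^1(M)$ provide, and everything else is the standard super-quadratic Ambrosetti--Rabinowitz estimate together with the positivity built into (GF).
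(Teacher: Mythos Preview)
Your proof is correct and follows the same strategy as the paper: bound the singular contribution by a constant using the monotonicity of $G(x,\cdot)$ (together with $G(\cdot,\eps)\in L^1(M)$), and use the Ambrosetti--Rabinowitz condition (F3) to extract $t^\mu$-growth from the $F$-integral. Your treatment of the $F$-term is in fact more careful than the paper's: the paper asserts directly that (F3) gives ``$F(u)\ge|u|^\mu$'', which does not literally follow from (F3) alone, whereas you correctly derive the monotonicity of $u\mapsto F(x,u)/u^\mu$ and then invoke the positivity $\int_M F(x,\beta\psi)\,dv_g>0$ from (GF) to obtain $\int_M F(x,t\psi)\,dv_g\ge c_0 t^\mu$.
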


\begin{proof}
The condition (F3) implies that $F(u)\geq |u|^\mu$ for every $u \in \R$, we get
\begin{align*}
\cJ_\eps(t\psi)&\leq \frac12t^2-t^\mu\int_M |\psi|^\mu\,dv_g - \frac12 \int_M G\left(x, \varepsilon + (t\psi)^2 \right) \,dv_g \\
&\leq \frac12t^2-t^\mu\int_M |\psi|^\mu\,dv_g - \frac12 \int_M G\left(x, \varepsilon \right) \,dv_g
\end{align*}
and we have following limit
$$
\lim_{t\to\infty}\cJ_\eps(t\psi)=-\infty.
$$
\end{proof}

Thanks to Lemma \ref{lem2}, we can find $t_2>t_0$ such that $\cJ_\eps(t_2\varphi)<0$. Define
$$
\Gamma=\{ \gamma\in C([0,1]; H^1(M)):\gamma (0)=t_1\varphi,\gamma (1)=t_2\varphi \}.
$$
From Lemmas \ref{lem1} and \ref{lem2}, as in \cite{Hebey}, using Mountain Pass Theorem we can find a Palais-Smale sequence on the level 
\begin{equation} \label{eq:c-eps}
c_\eps:=\inf_{\gamma\in\Gamma}\max_{t\in [0,1]}\cJ_\eps(\gamma(t)) \geq \Phi(t_0) > 0,
\end{equation}
i.e.
\begin{equation} \label{eq:PS-seq}
\cJ_\eps(u_n)\to c_\eps \quad\hbox{and}\quad \cJ_\eps'(u_n)\to 0.
\end{equation}
Moreover, since $\cJ_\varepsilon$ is even, we may assume that $u_n \geq 0$ almost everywhere on $M$.

\begin{Prop}\label{prop:eps}
Up to a subsequence, $(u_n)$ converges weakly in $H^1(M)$ and almost everywhere to a weak, nonnegative solution $u_\varepsilon \in H^1(M)$ of the problem
$$
-\Lg u+V(x)u=f(x,u)+g(x,\eps+u^2)u.
$$
\end{Prop}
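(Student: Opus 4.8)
The plan is to carry out the usual variational scheme in three stages: (i) prove that the Palais--Smale sequence $(u_n)$ is bounded in $H^1(M)$; (ii) extract a subsequence converging weakly in $H^1(M)$, strongly in $L^2(M)$ and almost everywhere; (iii) identify the weak limit $u_\varepsilon$ as a weak solution by passing to the limit in $\cJ_\varepsilon'(u_n)[\varphi]\to 0$ for each fixed $\varphi\in H^1(M)$.

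For boundedness I would combine the Ambrosetti--Rabinowitz condition (F3) with the sign information $G\le 0$ from (G1) and $g\ge 0$ from Remark~\ref{rem:assumptions}(b). Writing
\begin{align*}
\cJ_\varepsilon(u_n)-\tfrac1\mu\cJ_\varepsilon'(u_n)[u_n]
&=\Big(\tfrac12-\tfrac1\mu\Big)\|u_n\|^2-\int_M\Big(F(x,u_n)-\tfrac1\mu f(x,u_n)u_n\Big)\,dv_g\\
&\quad -\tfrac12\int_M G(x,\varepsilon+u_n^2)\,dv_g+\tfrac1\mu\int_M g(x,\varepsilon+u_n^2)u_n^2\,dv_g,
\end{align*}
each of the three integrals on the right is nonnegative, and each is finite because $\varepsilon+u_n^2\ge\varepsilon$: by (G2) one has $G(x,\varepsilon)\le G(x,\varepsilon+u_n^2)\le 0$ with $G(\cdot,\varepsilon)\in L^1(M)$ by (G3), and $0\le g(x,\varepsilon+u_n^2)\le g(x,\varepsilon)\le\max_M g(\cdot,\varepsilon)=:M_\varepsilon<\infty$ by Remark~\ref{rem:assumptions}(a). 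Hence the left-hand side is bounded below by $\big(\tfrac12-\tfrac1\mu\big)\|u_n\|^2$, while by \eqref{eq:PS-seq} it is bounded above by $c_\varepsilon+o(1)+\tfrac1\mu\|\cJ_\varepsilon'(u_n)\|_{(H^1(M))^*}\|u_n\|$ with $\|\cJ_\varepsilon'(u_n)\|_{(H^1(M))^*}\to 0$, which forces $\sup_n\|u_n\|<\infty$. Then, by reflexivity of $H^1(M)$ and Rellich--Kondrachov on the closed manifold $M$, along a subsequence $u_n\weakto u_\varepsilon$ in $H^1(M)$, $u_n\to u_\varepsilon$ in $L^2(M)$, and $u_n\to u_\varepsilon$ a.e.; since $u_n\ge 0$ a.e., also $u_\varepsilon\ge 0$ a.e.

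It then remains to pass to the limit in
$$
\cJ_\varepsilon'(u_n)[\varphi]=\int_M\big(\nabla_g u_n\nabla_g\varphi+V(x)u_n\varphi\big)\,dv_g-\int_M f(x,u_n)\varphi\,dv_g-\int_M g(x,\varepsilon+u_n^2)u_n\varphi\,dv_g\longrightarrow 0.
$$
The first term is the value at $u_n$ of the bounded linear functional $w\mapsto\int_M\big(\nabla_g w\nabla_g\varphi+V w\varphi\big)\,dv_g$ on $H^1(M)$ — indeed it is the inner product inducing $\|\cdot\|$ evaluated at $(u_n,\varphi)$ — so it converges to the corresponding expression with $u_\varepsilon$ by weak convergence. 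For the singular term, $g(x,\varepsilon+u_n^2)u_n\to g(x,\varepsilon+u_\varepsilon^2)u_\varepsilon$ a.e.\ by continuity of $g$ on $(0,\infty)$ (using $\varepsilon+u_n^2\ge\varepsilon$), while $|g(x,\varepsilon+u_n^2)u_n\varphi|\le M_\varepsilon|u_n\varphi|$ and $\int_M M_\varepsilon|u_n\varphi|\,dv_g\to\int_M M_\varepsilon|u_\varepsilon\varphi|\,dv_g$ by Cauchy--Schwarz and the $L^2$ convergence of $u_n$; the generalized dominated convergence theorem then gives $\int_M g(x,\varepsilon+u_n^2)u_n\varphi\,dv_g\to\int_M g(x,\varepsilon+u_\varepsilon^2)u_\varepsilon\varphi\,dv_g$, and the same bound shows $\int_M g(x,\varepsilon+u_\varepsilon^2)|u_\varepsilon\varphi|\,dv_g\le M_\varepsilon\|u_\varepsilon\|_{L^2}\|\varphi\|_{L^2}<\infty$.

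The only delicate term is $\int_M f(x,u_n)\varphi\,dv_g$, because (F1) allows $f$ to have critical growth and $u_n\to u_\varepsilon$ only weakly in $L^{2^*}(M)$. Here I would note that $|f(x,u_n)|^{(2^*)'}\lesssim 1+|u_n|^{2^*}$ with $(2^*)'=\frac{2^*}{2^*-1}$, so $(f(\cdot,u_n))$ is bounded in $L^{(2^*)'}(M)$ by the Sobolev embedding and the bound on $\|u_n\|$; combined with $f(x,u_n)\to f(x,u_\varepsilon)$ a.e., this yields $f(\cdot,u_n)\weakto f(\cdot,u_\varepsilon)$ in $L^{(2^*)'}(M)$, and pairing with the fixed $\varphi\in H^1(M)\hookrightarrow L^{2^*}(M)$ gives $\int_M f(x,u_n)\varphi\,dv_g\to\int_M f(x,u_\varepsilon)\varphi\,dv_g$. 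Collecting the three limits shows that for every $\varphi\in H^1(M)$
$$
\int_M\big(\nabla_g u_\varepsilon\nabla_g\varphi+V(x)u_\varepsilon\varphi\big)\,dv_g=\int_M f(x,u_\varepsilon)\varphi\,dv_g+\int_M g(x,\varepsilon+u_\varepsilon^2)u_\varepsilon\varphi\,dv_g,
$$
i.e.\ $u_\varepsilon$ is a nonnegative weak solution of the $\varepsilon$-perturbed problem. I expect the critical-growth $f$-term to be the only genuine obstacle — it cannot be handled by plain dominated convergence and needs the weak-$L^{(2^*)'}$ argument — whereas the $\varepsilon$-regularization makes the singular term elementary, since it keeps $g(x,\varepsilon+u_n^2)$ uniformly bounded.
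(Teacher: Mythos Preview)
Your proof is correct and follows essentially the same three-step scheme as the paper: boundedness of the Palais--Smale sequence via the Ambrosetti--Rabinowitz condition together with the sign properties of $G$ and $g$, extraction of a weakly/a.e.\ convergent subsequence, and passage to the limit term by term. The only differences are technical: for boundedness you use the single combination $\cJ_\varepsilon(u_n)-\tfrac1\mu\cJ_\varepsilon'(u_n)[u_n]$ while the paper first bounds $\int_M F(x,u_n)\,dv_g$ and then $\|u_n\|^2$; for the $f$-term you invoke weak $L^{(2^*)'}$-convergence of $f(\cdot,u_n)$ (bounded plus a.e.\ convergent) paired against $\varphi\in L^{2^*}$, whereas the paper shows uniform integrability of $f(\cdot,u_n)\varphi$ and applies Vitali; for the $g$-term you use the generalized dominated convergence theorem with majorants $M_\varepsilon|u_n\varphi|$ converging in $L^1$, while the paper again uses Vitali. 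All of these are equivalent standard devices, and your version has the minor advantage of testing directly against arbitrary $\varphi\in H^1(M)$ rather than only $\varphi\in C_0^\infty(M)$.
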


\begin{proof}
We can rewrite \eqref{eq:PS-seq}
\begin{equation}\label{PS-sequence_J}
\cJ_\eps(u_n)=\frac12\|u_n\|^2-\int_M F(x,u_n)\,dv_g- \frac12 \int_M G(x,\eps+u_n^2)\,dv_g=c_\eps+o(1)
\end{equation}
and
\begin{equation}\label{PS-sequence_J'}
\|u_n\|^2-\int_M f(x,u_n)u_n\,dv_g-\int_M g(x,\eps+u_n^2)u_n^2\,dv_g= \cJ_\varepsilon '(u_n)(u_n) = o(\|u_n\|).
\end{equation}
Combining these two formulas, in the same way as in \cite[Proof of Theorem 3.1]{Hebey}, we obtain that
\begin{align*}
2c_\eps+o(\|u_n\|)&\geq \int_M f(x,u_n)u_n-2F(x,u_n)\,dv_g+\underbrace{\int_M g(x,\eps + u_n^2)u_n^2-G(x,\eps+u_n^2)\,dv_g}_{\geq 0}\\
&\geq (\mu-2)\int_M F(x,u_n)\,dv_g,
\end{align*}
where (F3), (G1) and Remark \ref{rem:assumptions}(b) were used. Using this inequality and \eqref{PS-sequence_J} we get that
\begin{equation}\label{est:norm}
    \|u_n\|^2\leq \frac{4c_\eps}{\mu-2}+2c_\eps+o(\|u_n\|)
\end{equation}
for sufficiently large $n$, so the Palais-Smale sequence is bounded and up to a subsequence we have following covergences:
\begin{align*}
u_n\weakto u_\eps &\quad \mbox{ in } H^1 (M)\\
u_n\to u_\eps &\quad \mbox{ a.e. in } M .
\end{align*}
Fix any test function $\varphi \in C_0^\infty (M)$. Take any measurable set $E \subset M$ and note that
\begin{align}\label{est:convergence f}
\begin{split}
\int_E |f(x,u_n)\varphi|\,dv_g&\lesssim  \int_E (1+|u_n|^{2^*-1})|\varphi|\,dv_g\lesssim |\varphi\chi_E|_1+\int_E|u_n|^{2^*-1}|\varphi|\,dv_g\\
&\lesssim |\varphi\chi_E|_1+\left( S_V^{1/2^*}\|u_n\| \right)^{2^*-1}|\varphi\chi_E|_{2^*}
\end{split}
\end{align}
and since $(u_n)$ is bounded in $H^1(M)$, the family $\{ f(\cdot, u_n) \varphi\}$ is uniformly integrable and from the Vitali convergence theorem,
\begin{equation}\label{eq:f-conv}
\int_M f(x,u_n)\varphi\,dv_g\to\int_M f(x,u_\eps)\varphi\,dv_g.
\end{equation}
To pass to the limit in the singular term, (G2) and Cauchy-Schwarz inequality yield
\begin{equation}\label{est:convergence g}
\int_E g(x,\eps+u_n^2) |u_n\varphi|\,dv_g\leq |g(\cdot,\eps)|_\infty\int_M \chi_E| u_n\varphi|\,dv_g\leq |g(\cdot,\eps)|_\infty |\varphi\chi_E|_2|u_n|_2,
\end{equation}
having in boundedness of $(u_n)$ in $H^1(M)$, we get that $\{g(\cdot, \varepsilon + u_n^2) u_n \varphi\}$ is uniformly integrable and from Vitali convergence theorem 
\begin{equation}\label{eq:g-conv}
\int_M g(x,\eps+u_n^2) u_n \, dv_g \to \int_M g(x,\eps+u_\eps^2) u_\eps \, dv_g.
\end{equation}

Summing up, from weak convergence of $u_n$, \eqref{eq:f-conv}, and \eqref{eq:g-conv} we can pass to the limit in the condition $\cJ'(u_n)(\varphi)=0$ and we find that $u_\eps$ is a weak solution of the problem
\begin{equation}\label{eq:eps}
-\Lg u+V(x)u=f(x,u)+g(x,\eps+u^2)u.
\end{equation}
Since $u_n \geq 0$, from the pointwise convergence, $u_\varepsilon \geq 0$.
\end{proof}

\section{Regularity of solutions to \texorpdfstring{$\varepsilon$}{epsilon}-perturbed problem (\ref{eq:eps})}

In order to pass with $\eps\to 0^+$, following the strategy of \cite{Hebey}, we need information about the regularity of the solutions.
\begin{Prop}
The nonnegative, weak solution $u_\varepsilon \in H^1 (M)$ found in Proposition \ref{prop:eps} is of class $C^{2,\alpha}(M)$ for some $\alpha < 1$, and $u_\varepsilon > 0$ everywhere on $M$.
\end{Prop}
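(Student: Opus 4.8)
The plan is to run a bootstrap argument in the usual way, but with the singular term treated carefully near the (a priori possible) zero set of $u_\eps$. First I would observe that by Proposition \ref{prop:eps} the function $u_\eps\in H^1(M)$ solves $-\Lg u_\eps+V(x)u_\eps=h_\eps$ in the weak sense, where $h_\eps(x):=f(x,u_\eps(x))+g(x,\eps+u_\eps^2(x))u_\eps(x)$. The key first step is to get $h_\eps\in L^q(M)$ for every $q<\infty$: the term $f(x,u_\eps)$ is controlled by $1+|u_\eps|^{2^*-1}$, which lies in $L^{2^*/(2^*-1)}(M)$, and then a standard De Giorgi--Nash--Moser / Brezis--Kato iteration (exactly as in \cite{Hebey}) upgrades $u_\eps$ to $L^q(M)$ for all $q$, hence $f(x,u_\eps)\in L^q(M)$ for all $q$. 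For the singular term, since $\eps>0$ is fixed we have $0\le \eps+u_\eps^2$ bounded away from $0$ and below by $\eps$, so by (G2) (monotonicity) and Remark \ref{rem:assumptions}(a), $0\le g(x,\eps+u_\eps^2)\le |g(\cdot,\eps)|_\infty<\infty$; combined with $u_\eps\in L^q(M)$ this gives $g(x,\eps+u_\eps^2)u_\eps\in L^q(M)$ for all $q<\infty$. Thus $h_\eps\in L^q(M)$ for all $q<\infty$.

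Next I would invoke elliptic $L^p$-regularity for $-\Lg+V$ on the closed manifold $M$ (here (V) guarantees invertibility, and $V\in C^{0,\alpha}$): from $h_\eps\in L^q(M)$ with $q$ large we get $u_\eps\in W^{2,q}(M)$, and by Sobolev embedding (taking $q>N$) $u_\eps\in C^{1,\gamma}(M)$ for some $\gamma<1$. At this stage $u_\eps$ is continuous and nonnegative on the compact manifold $M$. The crucial point is then to rule out zeros: I would show $u_\eps>0$ everywhere. Suppose $u_\eps(x_0)=0$ for some $x_0$. Near such a point, because $\min_M g(\cdot,t)\to\infty$ as $t\to0^+$ by (G4) — or more simply because $g\ge0$ and the equation reads $-\Lg u_\eps+V(x)u_\eps=f(x,u_\eps)+g(x,\eps+u_\eps^2)u_\eps$ with the right-hand side controllable — I would apply the strong maximum principle. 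Concretely, rewrite $-\Lg u_\eps + \big(V(x)-\tfrac{f(x,u_\eps)}{u_\eps}-g(x,\eps+u_\eps^2)\big)u_\eps=0$ where, by (F2), $f(x,u_\eps)/u_\eps$ is bounded near a zero of $u_\eps$, and $g(x,\eps+u_\eps^2)\le|g(\cdot,\eps)|_\infty$; the zeroth-order coefficient is bounded, $u_\eps\ge0$ satisfies $-\Lg u_\eps+c(x)u_\eps\ge 0$ (since $f,g\ge0$ make the full right side $\ge$ a bounded multiple of $u_\eps$), so the strong maximum principle forces $u_\eps\equiv0$, contradicting $\cJ_\eps(u_\eps)=c_\eps>0$. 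Hence $u_\eps>0$ on $M$, and by compactness $\inf_M u_\eps>0$.

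With $u_\eps$ bounded below by a positive constant, the composed coefficient $x\mapsto g(x,\eps+u_\eps^2(x))$ is now Hölder continuous (it is a composition of the Hölder map $x\mapsto(x,\eps+u_\eps^2(x))$, using $u_\eps\in C^{1,\gamma}$ and (G1)--(G2), with $g$ which is Hölder in $x$ and continuous in $u$, and everything stays in a region where $g$ is locally Lipschitz in $u$ by the monotonicity in (G2)), and likewise $x\mapsto f(x,u_\eps(x))$ is Hölder by (F1). So $h_\eps\in C^{0,\alpha}(M)$, and Schauder estimates for $-\Lg+V$ give $u_\eps\in C^{2,\alpha}(M)$. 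The main obstacle is the interface between the two regularity regimes: one must first get enough continuity of $u_\eps$ to locate and exclude its zero set (so that the singular term is genuinely bounded and Hölder), and only afterwards can the Schauder step be applied. Everything else is the standard bootstrap for $-\Lg+V$ on a closed manifold, and since the argument runs verbatim as in \cite{Hebey} once the singular term is shown bounded, I would keep the write-up short, citing \cite{Hebey} for the $L^p$-iteration and a standard reference for Schauder theory and the strong maximum principle on manifolds.
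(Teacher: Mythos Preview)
Your approach is essentially the same as the paper's: bound the singular term via $0\le g(x,\eps+u_\eps^2)\le |g(\cdot,\eps)|_\infty$, run a Brezis--Kato iteration to get $u_\eps\in L^q$ for all $q$, bootstrap to $W^{2,q}$ and hence $C^{1,\gamma}$, invoke the strong maximum principle, and finish with Schauder. The paper orders things slightly differently (it writes $-\Lg u_\eps = k(x,u_\eps)$ with $|k(x,u)|\le a(x)(1+|u|)$, $a\in L^{N/2}$, and applies its appendix Lemma~\ref{Brezis-Kato-lemma} directly), but the content is the same.

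One caveat: your contradiction for $u_\eps\not\equiv 0$ relies on ``$\cJ_\eps(u_\eps)=c_\eps>0$'', which is not established in Proposition~\ref{prop:eps}; $u_\eps$ is only the weak limit of a Palais--Smale sequence at level $c_\eps$, and in a critical problem the energy need not pass to the weak limit. The paper is equally terse at this step (it simply writes ``From the strong maximum principle, we get that $w>0$'' without isolating why $w\not\equiv 0$), so this is not a defect of your strategy relative to the paper, but you should not cite $\cJ_\eps(u_\eps)=c_\eps$ as the reason. Also, your remark that monotonicity from (G2) makes $g$ locally Lipschitz in $u$ is not correct in general; the paper just asserts that $x\mapsto k(x,u_\eps(x))\in C^{0,\alpha}(M)$, so you are on the same footing there, but drop that justification.
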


\begin{proof}
Fix $\eps>0$. In the equation \eqref{eq:eps} we denote by
$$
h(x):=V(x)-g(x,\eps+u_\eps^2), \ x \in M,
$$
and observe that $h\in L^\infty(M).$ Indeed
$$
|h| = |V-g(\cdot ,\eps+u_\eps^2)|\leq |V|+g(\cdot,\eps+u_\eps^2)\leq |V|+g(\cdot,\eps)\in L^\infty(M).
$$
Denote now $w:=u_\eps$. From the strong maximum principle, we get that $w > 0$. Let us rewrite the equation \eqref{eq:eps} in the form
$$
-\Lg u_\eps=-hu_\eps+\frac{f(x,w(x))}{w}u_\eps
$$
and denote
$$
k(x,u_\eps):=-h(x)u_\eps+\frac{f(x,w)}{w(x)}u_\eps.
$$
Now the equation \eqref{eq:eps} takes form
$$
-\Lg u_\eps=k(x,u_\eps).
$$
From \eqref{ineq:growth}, for every $\delta > 0$ we can find $C_\delta > 0$ such that
$$
\left|\frac{f(x,w)}{w}\right|\leq \delta+C_\delta |w|^{2^*-2}.
$$
So we get that 
$$
|k(x,u_\eps)|\leq \left(\underbrace{|h(x)|+ \left|\frac{f(x,w(x))}{w(x)}\right|}_{a(x):=}\right)(1+|u_\eps|)
$$
and also $a\in L^{\frac{N}{2}}(M).$ So by the Brezis-Kato type result (see Lemma \ref{Brezis-Kato-lemma}), we get that $u_\eps\in L^q(M)$ for every $q<\infty$, and the standard bootstrap procedure shows that $u_\eps\in W^{2,q}(M).$ Now let us fix $\alpha<1$ and choose $q$ such that $\alpha\leq 1- \frac{N}{q}$. Then using the Sobolev embedding theorem (see \cite[Theorem 2.10, Theorem 2.20]{Aubin}) we get $u_\eps\in C^{1,\alpha}(M)$. Then, clearly $u_\varepsilon \in L^\infty (M)$ and it is easy to see that the map $M \ni x \mapsto k(x,u_\varepsilon(x)) \in \R$ is $C^{0,\alpha} (M)$. Hence, in particular, $u_\varepsilon \in W^{2,2} (M) \cap L^\infty (M)$ and $\Delta_g u \in C^{0,\alpha}(M)$. Then, the elliptic regularity theory yields that $u_\varepsilon \in C^{2,\alpha} (M)$. 
\end{proof}

\section{Proof of Theorem \ref{th:main}}
Similarly as in \cite{Hebey}, considering in \eqref{eq:c-eps} the path $\gamma(t)=t\psi, t\in[t_1,t_2]$ we get that
\begin{equation}\label{est:c_eps}
    0<\Phi(t_0)\leq c_\eps\leq c := \sup_{t \in [t_1, t_2]} \cJ(t\psi).
\end{equation}
Let $(\varepsilon_k) \subset (0,\infty)$ be a sequence such that $\eps_k\to 0^+$, and denote $u_k:=u_{\eps_k}$. Observe that by \eqref{est:norm}, \eqref{est:c_eps} and the weak lower semicontinuity of the norm, we get that sequence $(u_k)$ is bounded in $H^1(M)$ and, up to passing to a subsequence,
\begin{align*}
u_k\weakto u &\quad \mbox{ in } H^1(M)\\
u_k\to u &\quad \mbox{ a.e. in } M .
\end{align*}
Arguing similarly as in \eqref{est:convergence f} we get that
\begin{equation}\label{convergence of f on solution}
\int_M f(x,u_k)\varphi\,dv_g\to\int_M f(x,u)\varphi\,dv_g
\end{equation}
for every $\varphi\in C^\infty_0(M).$
Now we have to show
$$
\int_M g(x,\eps_k+u_k^2)u_k\varphi\,dv_g\to \int_M g(x,u^2)u\varphi\,dv_g.
$$
Firstly, we will show that there exists $\delta_0$ such that $u_k\geq \delta_0$ for $k$ sufficiently large. Let $x_k\in M$ be the point where $u_k$ has a global minimum. Then obviously $$-\Lg u (x_k)\leq 0$$ and we obtain
\begin{equation}\label{eq:nier}
V(x_k)u_k(x_k)+|f(x_k,u_k(x_k))|\geq g(x_k,\eps_k+u_k(x_k)^2)u_k(x_k).
\end{equation}
Suppose by contradiction that $u_k(x_k) \to 0$. Then \eqref{eq:nier}, (F2) and (G4) imply that
\begin{align*}
\max_M V + o(1) \geq V(x_k)+\frac{|f(x_k,u_k(x_k))|}{u_k(x_k)}\geq g(x_k,\eps_k+u_k(x_k)^2) \geq \min_M g(\cdot, \eps_k+u_k(x_k)^2) \to \infty 
\end{align*}
as $k\to\infty$, which is a contradiction. It follows that
$$
\min_M u_k \geq \delta_0
$$
for some $\delta_0 > 0$. So we can estimate
$$
g(x,\eps_k+u_k^2)\leq g(x,\delta_0^2),
$$
then using the H\"older inequality
$$
\int_E g(x,\eps+u_k^2)| u_k\varphi |\,dv_g\leq |g(x,\delta_0^2)|_\infty|u_k|_2 |\chi_E \varphi|_2,
$$
so by the boundedness of $(u_k)$ in $L^2 (M)$, we get by the Vitali convergence theorem that
$$
\int_M g(x,\eps+u_k^2)u_k\varphi\,dv_g\to \int_M g(x,u^2)u\varphi\,dv_g
$$
holds. Hence, letting $k\to \infty$ in
$$
-\Lg u_k+V(x)u_k=f(x,u_k)+g(x,\eps_k+u_k^2)u_k.
$$
we obtain that $u$ is a weak solution of \eqref{eq:einstein-lichnerowicz}. In particular, $u$ is positive a.e., since - from the pointwise convergence, $u(x) \geq \delta_0$ for a.e. $x \in M$.

\appendix
\section{Brezis-Kato result on a compact Riemannian manifold}

In the appendix we present a well-known Brezis-Kato result, see e.g. \cite[Lemma B.3]{Struwe}. Since we were unable to find a reference for the statement in the case of a Riemannian manifold, we provide it here (based on the proof of \cite[Lemma B.3]{Struwe}) for the readers' convenience. 

\begin{Lem}\label{Brezis-Kato-lemma}
Let $u\in H^1(M)$ be a weak solution to the equation
\begin{equation}\label{eq:Brezis-Kato}
-\Lg u=g(x,u),
\end{equation}
where $g:M\times\R\to \R$ is a Carath\'eodory function satisfying
$$
|g(x,u)|\leq a(x)(1+|u|)
$$
for some $a\in L^{N/2}(M)$. Then $u\in L^q(M)$ for any $q<\infty.$
\end{Lem}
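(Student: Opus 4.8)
\emph{Step 1: the basic energy estimate.} The plan is to run the classical Moser--De Giorgi iteration, as in the proof of \cite[Lemma B.3]{Struwe}, the only manifold-specific ingredient being the Sobolev inequality $\|v\|_{2^*}^2 \le C_S\big(\|\nabla v\|_2^2 + \|v\|_2^2\big)$ valid for all $v \in H^1(M)$ on the closed manifold $M$ (note the $\|v\|_2^2$ term, as no Poincar\'e inequality is available). Fix $s \ge 1$ and a truncation level $L > 0$, put $u_L := \min(|u|, L)$ and test \eqref{eq:Brezis-Kato} with $\varphi := u\, u_L^{2(s-1)}$. One first checks $\varphi \in H^1(M)$: this uses that $\nabla u_L$ is supported on $\{|u| < L\}$, where $|u| = u_L \le L$, so all products appearing in $\nabla\varphi$ lie in $L^2(M)$. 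A pointwise computation (argued on $\{u>0\}$ and $\{u<0\}$ separately, or by approximation) gives $\nabla u \cdot \nabla \varphi \ge u_L^{2(s-1)} |\nabla u|^2$, while for $v := u\, u_L^{s-1}$ one has $|\nabla v|^2 \le s^2\, u_L^{2(s-1)} |\nabla u|^2$. Hence
$$
\|\nabla v\|_2^2 \;\le\; s^2 \int_M g(x,u)\, u\, u_L^{2(s-1)} \, dv_g \;\le\; s^2 \int_M a(x)\big(|u| + u^2\big) u_L^{2(s-1)} \, dv_g .
$$
Splitting the last integral according to $\{|u| \le 1\}$ and $\{|u| > 1\}$, using $|u|\,u_L^{2(s-1)} \le 1$ on the former and $|u| \le u^2$ on the latter, together with $u^2 u_L^{2(s-1)} = v^2$ and $\|a\|_1 \le C(M)\|a\|_{N/2}$, we obtain $\|\nabla v\|_2^2 \le C s^2\big(\|a\|_{N/2} + \int_M a\, v^2 \, dv_g\big)$.

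\emph{Step 2: absorbing the bad part of $a$ — the key point.} Given $\eta > 0$, decompose $a = a_1 + a_2$ with $\|a_1\|_{N/2} < \eta$ and $a_2 \in L^\infty(M)$. By H\"older, $\int_M a_1 v^2 \, dv_g \le \|a_1\|_{N/2}\, \|v^2\|_{N/(N-2)} < \eta\, \|v\|_{2^*}^2$, while $\int_M a_2 v^2 \, dv_g \le \|a_2\|_\infty \|v\|_2^2$. Plugging this and the Step 1 estimate into the Sobolev inequality and choosing $\eta = \eta(s, C_S)$ so small that the $\eta \|v\|_{2^*}^2$ term is absorbed into the left-hand side yields
$$
\|v\|_{2^*}^2 \;\le\; C(s)\big(\|v\|_2^2 + 1\big),
$$
with $C(s)$ depending on $s$, $\|a\|_{N/2}$, $C_S$ and $\mathrm{vol}(M)$, but \emph{not} on $L$. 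Since $\|v\|_2^2 = \int_M u^2 u_L^{2(s-1)} \, dv_g \le \|u\|_{2s}^{2s}$ and $|v| \ge u_L^s$, this gives $\|u_L\|_{2^* s}^{2s} \le C(s)\big(\|u\|_{2s}^{2s}+1\big)$; so whenever $u \in L^{2s}(M)$ the right-hand side is finite and independent of $L$, and letting $L \to \infty$ (monotone convergence) gives $u \in L^{2^* s}(M)$.

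\emph{Step 3: iteration, and the main difficulty.} Since $u \in H^1(M) \hookrightarrow L^2(M)$, we start from $s_0 = 1$ and iterate the implication ``$u \in L^{2s}(M) \Rightarrow u \in L^{2^* s}(M)$'' along $s_{k+1} = \tfrac{2^*}{2}\, s_k = \tfrac{N}{N-2}\, s_k$; then $2 s_k = 2\big(\tfrac{N}{N-2}\big)^k \to \infty$, hence $u \in L^q(M)$ for every $q < \infty$. I expect the main obstacle to lie entirely in Step 1: rigorously justifying that $\varphi = u\, u_L^{2(s-1)}$ is an admissible test function and carrying out the truncated chain-rule computations (routine but requiring care with $u_L$ and with the sign of $u$), and, above all, keeping explicit track of how $C(s)$ depends on $s$ so that the passage $L \to \infty$ is legitimate at every stage. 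The decomposition of $a$ in Step 2 with an $s$-dependent threshold $\eta$ is the only genuinely non-mechanical idea, and it is exactly what makes each iteration step close.
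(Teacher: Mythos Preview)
Your proof is correct and follows essentially the same Moser--Brezis--Kato iteration as the paper (both being adaptations of \cite[Lemma~B.3]{Struwe}): truncated test function $u\,u_L^{2(s-1)}$, absorb the bad part of $a$ via the Sobolev inequality, then iterate $2s\mapsto 2^*s$. The only cosmetic difference is the splitting of $a$: the paper truncates by level set, writing $\int a v^2 \le K\int v^2 + \int_{\{a>K\}} a v^2$ and choosing $K$ so that $\big(\int_{\{a>K\}} a^{N/2}\big)^{2/N}$ is small, whereas you use the density decomposition $a=a_1+a_2$ with $a_2\in L^\infty$ and $\|a_1\|_{N/2}<\eta$; both isolate a piece of $a$ with small $L^{N/2}$ norm to be absorbed.
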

\begin{proof}
    Let $s\geq 0,$ $L\geq 1$ and denote $\varphi=\varphi_{s,L}:=u\min\{|u|^{2s},L^2\}\in H^1(M)$. Observe that
    $$
    \int_M \nabla u\cdot \nabla \varphi\,dv_g = \int_M |\nabla u|^2\min \{|u|^{2s},L^2\}\,dv_g +\frac{s}{2}\int_{\{x\in M:|u(x)|^s\leq L\}} |\nabla(|u|^2)|^2|u|^{2s-2}\,dv_g.
    $$
    Hence, testing equation \eqref{eq:Brezis-Kato} with $\varphi$ we get that 
\begin{align*}
    &\quad \int_M |\nabla u|^2\min \{|u|^{2s},L^2\}\,dv_g +\frac{s}{2}\int_{\{x\in M:|u(x)|^s\leq L\}} |\nabla(|u|^2)|^2|u|^{2s-2}\,dv_g = \int_M \nabla u\cdot \nabla \varphi\,dv_g= \\
    &= \int_M g(x,u) \varphi \, dv_g \leq \int_M a(1+|u|)|u| \min \{|u|^{2s},L^2\}\,dv_g\\
    &\leq \int_M a(1+2|u|^2) \min \{|u|^{2s},L^2\}\,dv_g =  \int_M a\min\{|u|^{2s},L^2\}\,dv_g+2\int_M a|u|^2 \min \{|u|^{2s},L^2\}\,dv_g\\
    &=  \int_M a\min\{|u|^{2s},L^2\}(1-|u|^2)\,dv_g+3\int_M a|u|^2 \min \{|u|^{2s},L^2\}\,dv_g\\
    &\leq \int_M a\,dv_g +  3\int_M a|u|^2\min\{|u|^{2s},L^2\}\,dv_g.
\end{align*}
Then, assuming that $u\in L^{2s+2}(M)$, for any $K\geq 1$ we can estimate ($\widetilde{C}>0$ may vary from one line to another):
\begin{align*}
&\quad \int_M |\nabla(u\min\{|u|^{s},L\})|^2\,dv_g\leq 2\int_M |\nabla u|^2\min\{|u|^{2s},L^2\}\,dv_g+2\int_{\{x\in M:|u(x)|^s\leq L\}} |u\nabla (|u|^{s})|^2\,dv_g\\
&\leq \widetilde{C} \left(1+\int_M a|u|^2\min\{|u|^{2s},L^2\}\,dv_g\right)\\
&\leq \widetilde{C} \left(1+K\int_M |u|^2\min\{|u|^{2s},L^2\}\,dv_g+\int_{\{x\in M:a(x)>K\}} a|u|^2\min\{|u|^{2s},L^2\}\,dv_g\right)\\
&\leq \widetilde{C} \left(1+K|u|_{2s+2}^{2s+2}+\int_{\{x\in M:a(x)>K\}} a|u|^2\min\{|u|^{2s},L^2\}\,dv_g\right)\\
&\leq \widetilde{C}(1+K)+\underbrace{\widetilde{C}\left(\int_{\{x\in M:a(x)>K\}} a^{N/2}\,dv_g\right)^{2/N}}_{=:\gamma(K)}\left(\int_M (|u|\min\{|u|^s,L\})^{2^*}\,dv_g\right)^{2/2^*}.
\end{align*}
Now let us choose $K\geq 1$ such that $\gamma(K)\leq\frac12$, and we obtain that
$$
\int_M |\nabla(u\min\{|u|^{s},L\})|^2\,dv_g\lesssim 1,
$$
so we have uniform bound (with respect to $L$) on the $L^2$-norm of $\nabla(u\min\{|u|^{s},L\})$. Hence taking $L\to\infty$ we obtain that 
$$
\int_M |\nabla(|u|^{s+1})|^2<\infty.
$$
Thus we have shown that $|u|^{s+1}\in H^1(M)\subset L^{2^*}(M).$ That means that $u\in L^{\frac{2(s+1)N}{N-2}}$. Taking $s_0=0$ and $s_i+1:=(s_{i-1}+1)\frac{N}{N-2}$, we obtain $u\in L^q(M)$ for every $q<\infty$.
\end{proof}

\section*{Acknowledgements}
Bartosz Bieganowski and Adam Konysz were partly supported by the National
Science Centre, Poland (Grant no. 2022/47/D/ST1/00487).

\end{document}